\numberwithin{equation}{section} \setlength{\textwidth}{16cm}
\newtheorem{theorem}{Theorem}[section]
\newtheorem{lemma}[theorem]{Lemma}
\theoremstyle{definition}
\newtheorem{definition}[theorem]{Definition}
\theoremstyle{remark}
\numberwithin{equation}{section}
\begin{document}
\thanks{$^{\ast }$corresponding author.}
\title[Toeplitz Matrices-with $q$-derivative operator]{Construction of Toeplitz Matrices whose elements are the coefficients
of univalent functions associated with $q$-derivative operator}
\author{Nanjundan Magesh$^{1}$, \c{S}ahsene Alt\i nkaya$^{2,\ast }$, Sibel
Yal\c{c}\i n$^{2}$}
\address{$^{1}$Post-Graduate and Research Department of Mathematics\\
Government Arts College for Men\\
Krishnagiri 635001, Tamilnadu, India.}
\email{nmagi\_2000@yahoo.co.in}
\address{$^{2}$Department of Mathematics, Faculty of Arts and Science,\\ Uludag University, 16059, Bursa, Turkey.}
\email{sahsene@uludag.edu.tr, syalcin@uludag.edu.tr}
\maketitle

\begin{abstract}
In this paper, we find the coefficient bounds using symmetric Toeplitz determinants for the functions belonging to the subclass $R(q)$.
\
\newline \textbf{Keywords:} Univalent functions, Toeplitz matrices, \textit{q-}derivative operator.
\
\newline \textbf{Mathematics Subject Classification 2010:} 30C45, 05A30, 30C50, 33D15.
\end{abstract}

\section{Introduction, Definitions and Notations}

Let $A$ indicate an analytic function family, which is normalized under the
condition of $f(0)=f^{\prime }(0)-1=0$ in $\Delta =\left\{ z:z\in
%TCIMACRO{\U{2102} }%
%BeginExpansion
\mathbb{C}
%EndExpansion
\text{ and }\left\vert z\right\vert <1\right\} $ and given by the following
Taylor-Maclaurin series:
\begin{equation}
f(z)=z+\overset{\infty }{\underset{n=2}{\sum }}a_{n}z^{n}.  \label{eq1}
\end{equation}%
Also let $S$ be the subclass of $A$ consisting functions of the form (\ref{eq1}) which
are also univalent in $\Delta .$

Let $f$ be given by (\ref{eq1}). Then $f\in RT$ if it satisfies the
inequality%
\begin{equation*}
\Re \left( f^{\prime }(z)\right) >0,\ \ \ \ \ z\in \Delta .
\end{equation*}%
The subclass $R$ was studied systematically by MacGregor \cite{mac} who
indeed referred to numerous earlier investigations involving functions whose
derivative has a positive real part.

In the field of Geometric Functions Theory, various subclasses of analytic
functions have been studied from different viewpoints. The fractional
\textit{q}-calculus is the important tools that are used to investigate
subclasses of analytic functions. For example, the extension of the theory
of univalent functions can be described by using the theory of \textit{q}%
-calculus. Moreover, the \textit{q}-calculus operators, such as fractional
\textit{q}-integral and fractional \textit{q}-derivative operators, are used
to construct several subclasses of analytic functions (see, e.g., \cite%
{Aydogan 2013,Polatoglu 2016,Ucar 2016}). In a recent paper
Purohit and Raina \cite{Purohit 2013}, investigated applications of
fractional \textit{q}-calculus operators to defined certain new classes of
functions which are analytic in the open disk. Later, Mohammed and Darus
\cite{Mohammed 2013} studied approximation and geometric properties of these
\textit{q}-operators in some subclasses of analytic functions in compact
disk.

For the convenience, we provide some basic definitions and concept details
of \textit{q}-calculus which are used in this paper. We suppose throughout
the paper that $0<q<1.$ We shall follow the notation and terminology as in
\cite{Gasper 90}. We recall the definitions of fractional \textit{q}-calculus
operators of complex valued function $f(z).$

\begin{definition}
Let $q\in (0,1)$ and define%
\begin{equation*}
\left[ n\right] _{q}=\frac{1-q^{n}}{1-q},
\end{equation*}%
for $n\in
%TCIMACRO{\U{2115} }%
%BeginExpansion
\mathbb{N}
%EndExpansion
.$\
\end{definition}

\begin{definition}
(see \cite{Jackson 08}) The q-derivative of a function $f$, defined on a
subset of $\mathbb{C}$, is given by
\begin{equation*}
(D_{q}f)(z)=\left\{
\begin{array}{lll}
\dfrac{f(z)-f(qz)}{(1-q)z} & for & z\neq 0, \\
&  &  \\
f^{\prime }(0) & for & z=0.%
\end{array}%
\right.
\end{equation*}
\end{definition}

We note that $\lim\limits_{q\rightarrow 1^{-}}(D_{q}f)(z)=f^{\prime }(z)$ if
$f$\ is differentiable at $z$. Additionally, in view of (\ref{eq1}), we
deduce that
\begin{equation}
(D_{q}f)(z)=1+\overset{\infty }{\underset{n=2}{\sum }}\left[ n\right]
_{q}a_{n}z^{n-1}.  \label{eq3}
\end{equation}

Toeplitz matrices frequently arise in many application areas and have been
attracted much attention in recent years. They arise in pure mathematics:
algebra, algebraic geometry, analysis, combinatorics, differential geometry,
as well as in applied mathematics: approximation theory, compressive
sensing, numerical integral equations, numerical integration, statistics,
time series analysis, and among other areas (see, for example \cite{ye}).

The symmetric Toeplitz determinant of $f$ \ for $n\geq 1$ and $q\geq 1$ is
defined by Thomas and Halim \cite{t}, as follows:%
\begin{equation*}
T_{q}(n)=\left\vert
\begin{array}{llll}
a_{n} & a_{n+1} & \cdots  & a_{n+q-1} \\
a_{n+1} & a_{n} & \cdots  & a_{n+q-2} \\
\vdots  & \vdots  & \vdots  & \vdots  \\
a_{n+q-1} & a_{n+q-2} & \cdots  & a_{n}%
\end{array}%
\right\vert \ \ \ \ \ \ (a_{1}=1).
\end{equation*}

Note that%
\begin{equation*}
T_{2}(2)=\left\vert
\begin{array}{ll}
a_{2} & a_{2} \\
a_{2} & a_{2}%
\end{array}%
\right\vert ,\ \ T_{2}(3)=\left\vert
\begin{array}{ll}
a_{3} & a_{4} \\
a_{4} & a_{3}%
\end{array}%
\right\vert ,
\end{equation*}%
and%
\begin{equation*}
T_{3}(1)=\left\vert
\begin{array}{ccc}
1 & a_{2} & a_{3} \\
a_{2} & 1 & a_{2} \\
a_{3} & a_{2} & 1%
\end{array}%
\right\vert ,\ \ T_{3}(2)=\left\vert
\begin{array}{ccc}
a_{2} & a_{3} & a_{4} \\
a_{3} & a_{2} & a_{3} \\
a_{4} & a_{3} & a_{2}%
\end{array}%
\right\vert .
\end{equation*}%
Very recently, the estimates of the Toeplitz determinant $\left\vert
T_{q}(n)\right\vert $ for functions in $RT$ have been studied in \cite{ra}.

\section{Preliminaries}

Let $P$ be the class of functions with positive real part consisting of all
analytic functions $p:\Delta \rightarrow
%TCIMACRO{\U{2102} }%
%BeginExpansion
\mathbb{C}
%EndExpansion
$ satisfying $p(0)=1$ and $\Re (p(z))>0.$ The class $P$ is called the class
of Caratheodory function.

The following results will be required for proving our results.

\begin{lemma}\label{le-Pom75}
\cite{Pommerenke 75} If the function $p\in P$, then%
\begin{equation*}
\left\vert p_{n}\right\vert \leq 2~\ \ \ \ \ \left( n\in
%TCIMACRO{\U{2115} }%
%BeginExpansion
\mathbb{N}
%EndExpansion
=\left\{ 1,2,\ldots \right\} \right)
\end{equation*}%
and%
\begin{equation*}
\left\vert p_{2}-\frac{p_{1}^{2}}{2}\right\vert \leq 2-\frac{\left\vert
p_{1}\right\vert ^{2}}{2}.\
\end{equation*}
\end{lemma}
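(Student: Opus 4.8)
\emph{Proof proposal.} The plan is to prove the two assertions separately; both are classical facts about the Carath\'{e}odory class, resting on standard integral and subordination representations of functions in $P$.

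For the bound $\left\vert p_{n}\right\vert \leq 2$ I would use the Herglotz representation: every $p\in P$ can be written as $p(z)=\int_{\left\vert \zeta \right\vert =1}\frac{1+\bar{\zeta}z}{1-\bar{\zeta}z}\,d\mu (\zeta )$ for some probability measure $\mu $ on the unit circle. Expanding the kernel as a geometric series, $\frac{1+\bar{\zeta}z}{1-\bar{\zeta}z}=1+2\sum_{k\geq 1}\bar{\zeta}^{\,k}z^{k}$, and comparing with $p(z)=1+\sum_{n\geq 1}p_{n}z^{n}$ gives $p_{n}=2\int_{\left\vert \zeta \right\vert =1}\bar{\zeta}^{\,n}\,d\mu (\zeta )$, so $\left\vert p_{n}\right\vert \leq 2\int d\mu =2$. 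Equivalently, and avoiding measure theory, one notes that $u=\Re p$ is positive and harmonic with $u(0)=1$, hence $\frac{1}{2\pi }\int_{0}^{2\pi }u(re^{i\theta })\,d\theta =1$ for every $r<1$ by the mean value property, while for $n\geq 1$ the Fourier formula $p_{n}r^{n}=\frac{1}{\pi }\int_{0}^{2\pi }u(re^{i\theta })e^{-in\theta }\,d\theta $ holds; taking absolute values, using $u>0$, and letting $r\rightarrow 1^{-}$ yields $\left\vert p_{n}\right\vert \leq 2$.

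For the second inequality I would pass to the associated Schwarz function. Since $\Re p>0$ and $p(0)=1$, the map $\omega (z)=\frac{p(z)-1}{p(z)+1}$ is analytic in $\Delta $ with $\omega (0)=0$ and $\left\vert \omega (z)\right\vert <1$; write $\omega (z)=c_{1}z+c_{2}z^{2}+\cdots $. Inverting, $p(z)=\frac{1+\omega (z)}{1-\omega (z)}=1+2\omega (z)+2\omega (z)^{2}+\cdots $, and matching coefficients of $z$ and $z^{2}$ gives $p_{1}=2c_{1}$ and $p_{2}=2c_{2}+2c_{1}^{2}$, whence $p_{2}-\tfrac{1}{2}p_{1}^{2}=2c_{2}$. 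The claim now reduces to the sharp estimate $\left\vert c_{2}\right\vert \leq 1-\left\vert c_{1}\right\vert ^{2}$ for a Schwarz function. This follows by applying Schwarz's lemma to the self-map of $\Delta $ obtained by composing $\psi (z)=\omega (z)/z$ (which maps $\Delta $ into $\overline{\Delta }$ by Schwarz's lemma, and into $\Delta $ in the nondegenerate case) with the disc automorphism $w\mapsto \frac{w-c_{1}}{1-\overline{c_{1}}w}$: the resulting function $g$ fixes the origin, so $\left\vert g^{\prime }(0)\right\vert \leq 1$, and since $g^{\prime }(0)=c_{2}/(1-\left\vert c_{1}\right\vert ^{2})$ we get $\left\vert c_{2}\right\vert \leq 1-\left\vert c_{1}\right\vert ^{2}$ (the degenerate case $\left\vert c_{1}\right\vert =1$ forces $\omega $ to be a rotation and $c_{2}=0$). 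Combining, $\left\vert p_{2}-\tfrac{1}{2}p_{1}^{2}\right\vert =2\left\vert c_{2}\right\vert \leq 2(1-\left\vert c_{1}\right\vert ^{2})=2-\tfrac{1}{2}\left\vert p_{1}\right\vert ^{2}$, because $\left\vert p_{1}\right\vert ^{2}=4\left\vert c_{1}\right\vert ^{2}$.

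The series manipulations and coefficient comparisons are routine. The step that needs the most care is the sharp Schwarz-function estimate $\left\vert c_{2}\right\vert \leq 1-\left\vert c_{1}\right\vert ^{2}$: one must correctly form the auxiliary self-map of the disc, check that it is genuinely analytic at the origin (removing the apparent singularity of $\omega (z)/z$) and, in the nondegenerate case, maps $\Delta $ into $\Delta $, and then read off the derivative bound from Schwarz's lemma. For the first inequality the only subtlety is justifying the passage $r\rightarrow 1^{-}$ under the integral, which is immediate from the monotonicity of the integral means of the positive harmonic function $u$ (or from Fatou's lemma).
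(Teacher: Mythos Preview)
Your argument is correct: the Herglotz (or harmonic mean-value) proof of $\left\vert p_{n}\right\vert \leq 2$ and the Schwarz-function reduction for the second inequality are the standard routes, and the computations you give are accurate. Note, however, that the paper does not supply its own proof of this lemma; it merely quotes the result from Pommerenke's monograph \cite{Pommerenke 75} and uses it as a black box in the subsequent theorems. So there is nothing to compare against, but what you have written is a complete and self-contained justification of the cited statement.
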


\begin{lemma}\label{le-gr}
\cite{gr} If the function $p\in P$, then%
\begin{eqnarray*}
2p_{2} &=&p_{1}^{2}+x(4-p_{1}^{2}) \\
&& \\
4p_{3}
&=&p_{1}^{3}+2(4-p_{1}^{2})p_{1}x-p_{1}(4-p_{1}^{2})x^{2}+2(4-p_{1}^{2})(1-%
\left\vert x\right\vert ^{2})z
\end{eqnarray*}%
for some x, z with $\left\vert x\right\vert \leq 1$ and $\left\vert
z\right\vert \leq 1.$
\end{lemma}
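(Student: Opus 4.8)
The plan is to realize the Carath\'eodory function $p$ through a Schwarz function and to extract the two identities from the first two steps of the Schur (continued fraction) algorithm applied to that Schwarz function. Since this lemma is used only to estimate moduli of Taylor coefficients, which do not change when $p(z)$ is replaced by $p(e^{i\vartheta}z)\in P$, one may normalise $p_1\in[0,2]$; this is legitimate because $|p_1|\le 2$ by Lemma~\ref{le-Pom75}. Write
\begin{equation*}
p(z)=\frac{1+w(z)}{1-w(z)},\qquad w(z)=c_1z+c_2z^{2}+c_3z^{3}+\cdots ,
\end{equation*}
with $w:\Delta\to\Delta$ analytic and $w(0)=0$. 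Comparing Taylor coefficients in $(1-w)p=1+w$ gives $p_1=2c_1$, $p_2=2(c_2+c_1^{2})$ and $p_3=2(c_3+2c_1c_2+c_1^{3})$.

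The first identity now follows at once from the second inequality of Lemma~\ref{le-Pom75}: it says $\left|2p_2-p_1^{2}\right|\le 4-p_1^{2}$, so $2p_2-p_1^{2}=x(4-p_1^{2})$ for some $x$ with $|x|\le 1$, i.e.\ $c_2=(1-c_1^{2})x$ since $4-p_1^{2}=4(1-c_1^{2})$.

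For the second identity I would carry the Schur algorithm one step further. Assuming $|c_1|<1$ (the case $|c_1|=1$, i.e.\ $p_1=2$, is trivial), Schwarz's lemma makes $w(z)/z$ again a self-map of $\Delta$ with value $c_1=p_1/2\in[0,1)$ at the origin, so there is an analytic $\varphi:\Delta\to\overline{\Delta}$ with $w(z)/z=(c_1+z\varphi(z))/(1+c_1z\varphi(z))$. Writing $\varphi(z)=x+(1-|x|^{2})\zeta\,z+\cdots$, where $x=\varphi(0)$ and $\zeta$ (with $|x|\le1$, $|\zeta|\le1$) is the value at the origin of the next Schur iterate, and expanding $(c_1+z\varphi)/(1+c_1z\varphi)=c_1+(1-c_1^{2})z\varphi-c_1(1-c_1^{2})z^{2}\varphi^{2}+\cdots$ to order $z^{2}$, one matches coefficients with $c_1+c_2z+c_3z^{2}+\cdots$ to obtain $c_2=(1-c_1^{2})x$ (again) and $c_3=(1-c_1^{2})\bigl[(1-|x|^{2})\zeta-c_1x^{2}\bigr]$.

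Substituting these into $p_2=2(c_2+c_1^{2})$ and $p_3=2(c_3+2c_1c_2+c_1^{3})$ and using $c_1=p_1/2$, $1-c_1^{2}=(4-p_1^{2})/4$ gives $2p_2=p_1^{2}+x(4-p_1^{2})$ and, after multiplying by $4$ and collecting the terms in $x$, $x^{2}$, $\zeta$,
\begin{equation*}
4p_3=p_1^{3}+2(4-p_1^{2})p_1x-p_1(4-p_1^{2})x^{2}+2(4-p_1^{2})(1-|x|^{2})\zeta ,
\end{equation*}
which is the claimed identity with $\zeta$ playing the role of $z$. I expect the only real work to be the bookkeeping in that Schur-step expansion — correctly reading off the $z^{1}$ and $z^{2}$ coefficients of $(c_1+z\varphi)/(1+c_1z\varphi)$ and keeping track of the constraint that the iterated Schur functions map into $\overline{\Delta}$ — together with the routine remark that normalising $p_1\in[0,2]$ costs nothing; there is no conceptual difficulty here, the statement being a classical description of the coefficient region of $P$.
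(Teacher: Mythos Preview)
The paper does not prove this lemma at all; it merely cites \cite{gr} and uses the result as a black box in the subsequent theorems. Your argument via the Cayley transform $p=(1+w)/(1-w)$ and two steps of the Schur algorithm for the Schwarz function $w$ is correct and is in fact the standard derivation of this parametrisation (commonly attributed to Libera and Z\l otkiewicz rather than to Grenander--Szeg\H{o}). The coefficient relations $p_1=2c_1$, $p_2=2(c_2+c_1^{2})$, $p_3=2(c_3+2c_1c_2+c_1^{3})$, the Schur-step formulas $c_2=(1-c_1^{2})x$ and $c_3=(1-c_1^{2})\bigl[(1-|x|^{2})\zeta-c_1x^{2}\bigr]$, and the final substitution all check out line by line.

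Your remark about normalising $p_1\in[0,2]$ is not merely a convenience but is actually required: as literally written, with $4-p_1^{2}$ rather than $4-|p_1|^{2}$, the identities presuppose $p_1$ real (otherwise the Schur step produces $1-|c_1|^{2}$, not $1-c_1^{2}$). The paper itself imposes exactly this normalisation when it applies the lemma, so your reading is the intended one. In short, you have supplied a complete and correct proof where the paper offers only a citation.
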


\begin{definition}
A function $f\in A$ is said to be in the class $R(q),$ if the following
condition holds%
\begin{equation*}
\Re \left( D_{q}f\right) (z)>0,\ \ \ \ \ z\in \Delta .
\end{equation*}
\end{definition}

We note that%
\begin{equation*}
\lim_{q\rightarrow 1^{-}}R(q)=\left\{ f\in A:\lim_{q\rightarrow 1^{-}}\Re
\left( D_{q}f\right) (z)>0,\ \ z\in \Delta \right\} =RT.
\end{equation*}

The aim of this work is to obtain the coefficient bounds using symmetric
Toeplitz determinants $T_{2}(2),$ $T_{2}(3),$ $T_{3}(2)$ and $T_{3}(1)$ for
the functions belonging to the subclass $R(q)$.

\section{Main Results and Their Consequences}

\begin{theorem}
Let $\ f$ given by (\ref{eq1}) be in the class $R(q).$Then%
\begin{equation*}
\left\vert T_{2}(2)\right\vert \leq \frac{4q^{2}(q^{2}+2q+2)}{%
(1+2q+2q^{2}+q^{3})^{2}}.
\end{equation*}
\end{theorem}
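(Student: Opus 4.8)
The plan is to transfer the estimate to the Carath\'eodory class. Since $f\in R(q)$ means exactly that $p:=D_qf$ is analytic in $\Delta$ with $p(0)=1$ (by \eqref{eq3}) and $\Re p>0$, we have $p\in P$; write $p(z)=1+p_1z+p_2z^2+\cdots$. Comparing with \eqref{eq3} and using $[2]_q=1+q$, $[3]_q=1+q+q^2$ yields
\[
a_2=\frac{p_1}{1+q},\qquad a_3=\frac{p_2}{1+q+q^2},
\]
so in particular Lemma~\ref{le-Pom75} already gives $|a_2|\le\frac{2}{1+q}$ and $|a_3|\le\frac{2}{1+q+q^2}$. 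From the definition of $T_q(n)$ (the case $q=2$, $n=2$), $T_2(2)=a_2^2-a_3^2$, hence
\[
T_2(2)=\frac{p_1^2}{(1+q)^2}-\frac{p_2^2}{(1+q+q^2)^2}.
\]

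To bound $|T_2(2)|$ I would cut down the free parameters: normalize so that $p_1=p\in[0,2]$ (using $|p_1|\le2$ from Lemma~\ref{le-Pom75}), and substitute the first identity of Lemma~\ref{le-gr}, $2p_2=p^2+x(4-p^2)$ with $|x|\le1$, together with the sharpened inequality $\bigl|p_2-\tfrac12p_1^2\bigr|\le2-\tfrac12|p_1|^2$ from Lemma~\ref{le-Pom75} to control $p_2^2$. This reduces $|T_2(2)|$ to an explicit function of $p\in[0,2]$ and $x$ with $|x|\le1$, which is then maximized. The remaining work is algebraic: using $(1+q)(1+q+q^2)=1+2q+2q^2+q^3$ and $(1+q+q^2)^2-(1+q)^2=q^2(q^2+2q+2)$ one recognizes the maximum as $\dfrac{4q^2(q^2+2q+2)}{(1+2q+2q^2+q^3)^2}=\dfrac{4}{(1+q)^2}-\dfrac{4}{(1+q+q^2)^2}$, the asserted bound.

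The step I expect to be the crux is this last estimation. A naive triangle inequality applied to $\dfrac{p_1^2}{(1+q)^2}-\dfrac{p_2^2}{(1+q+q^2)^2}$ is far too lossy, so one must keep track of the sign of the difference and genuinely use the link between $p_1$ and $p_2$ furnished by Lemma~\ref{le-gr} (equivalently, the refined inequality of Lemma~\ref{le-Pom75}); moreover the reduction to a real $p_1$ should be justified with some care, since $a_2^2-a_3^2$ is not literally invariant under the rotation $f(z)\mapsto e^{-i\theta}f(e^{i\theta}z)$. Pinning down the extremal configuration — which one anticipates at an endpoint, $p=2$ (or $p=0$) with $|x|=1$ — and checking that it returns exactly the value above is where the care lies.
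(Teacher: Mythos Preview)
Your plan coincides with the paper's proof: write $(D_qf)=p\in P$, read off $a_2=p_1/[2]_q$ and $a_3=p_2/[3]_q$, substitute $2p_2=p^2+x(4-p^2)$ from Lemma~\ref{le-gr}, normalise $p_1=p\in[0,2]$, and maximise over $|x|\le 1$ and then over $p$, the maximum occurring at $|x|=1$, $p=2$. The paper handles your ``triangle-inequality'' worry exactly as you anticipate---it keeps the sign-changing piece $\tfrac{p^4}{4[3]_q^{2}}-\tfrac{p^{2}}{[2]_q^{2}}$ inside a single absolute value and only uses the triangle inequality on the $x$-dependent terms---and it treats the reduction to real $p_1$ just as informally as you feared.
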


\begin{proof}
Let $\ f\in R(q).$ Then there exists a function $p\in P$ such that%
\begin{equation}
\left( D_{q}f\right) (z)=p(z).  \label{e3}
\end{equation}%
By equating the coefficients, we obtain
\begin{equation}
a_{2}=\frac{p_{1}}{\left[ 2\right] _{q}},  \label{eq9}
\end{equation}%
\begin{equation}
a_{3}=\frac{p_{2}}{\left[ 3\right] _{q}},  \label{eq10}
\end{equation}%
and%
\begin{equation}
a_{4}=\frac{p_{3}}{\left[ 4\right] _{q}}.  \label{eq11}
\end{equation}%
It follows from (\ref{eq9}), (\ref{eq10}) and (\ref{eq11}) that%
\begin{equation*}
\left\vert T_{2}(2)\right\vert =\left\vert a_{3}^{2}-a_{2}^{2}\right\vert
=\left\vert \frac{p_{2}^{2}}{\left[ 3\right] _{q}^{2}}-\frac{p_{1}^{2}}{%
\left[ 2\right] _{q}^{2}}\right\vert .
\end{equation*}%
Making use of Lemma \ref{le-gr} to express $p_{2}$ in terms of $p_{1}$, we obtain%
\begin{equation*}
\left\vert a_{3}^{2}-a_{2}^{2}\right\vert =\left\vert \frac{p_{1}^{4}}{4%
\left[ 3\right] _{q}^{2}}-\frac{p_{1}^{2}}{\left[ 2\right] _{q}^{2}}+\frac{%
xp_{1}^{2}(4-p_{1}^{2})}{2\left[ 3\right] _{q}^{2}}+\frac{%
x^{2}(4-p_{1}^{2})^{2}}{4\left[ 3\right] _{q}^{2}}\right\vert .
\end{equation*}%
By Lemma \ref{le-Pom75}, we have $\left\vert p_{1}\right\vert \leq 2$. For convenience of
notation, we take $p_{1}=p$ and we may assume without loss of generality
that $p\in \lbrack 0,2]$. Applying the triangle inequality with $P=(4-p^{2})$%
, we get
\begin{equation*}
\left\vert a_{3}^{2}-a_{2}^{2}\right\vert \leq \left\vert \frac{p^{4}}{4%
\left[ 3\right] _{q}^{2}}-\frac{p^{2}}{\left[ 2\right] _{q}^{2}}\right\vert +%
\frac{\left\vert x\right\vert p^{2}P}{2\left[ 3\right] _{q}^{2}}+\frac{%
\left\vert x\right\vert ^{2}P^{2}}{4\left[ 3\right] _{q}^{2}}=:F(\left\vert
x\right\vert ).
\end{equation*}%
Differentiating $F(\left\vert x\right\vert )$, one can see clearly that $%
F^{\prime }(\left\vert x\right\vert )>0$ on $[0,1],$ and so $F(\left\vert
x\right\vert )\leq F(1)$. Hence
\begin{equation*}
F(\left\vert x\right\vert )\leq F(1)=\left\vert \frac{p^{4}}{4\left[ 3\right]
_{q}^{2}}-\frac{p^{2}}{\left[ 2\right] _{q}^{2}}\right\vert +\frac{1}{\left[
3\right] _{q}^{2}}\left( 4-\frac{p^{4}}{4}\right) .
\end{equation*}%
Treating the cases when the absolute term is either positive or negative, we
can show that this expression $F(\left\vert x\right\vert )$ has a maximum
value $\frac{p^{2}}{\left[ 2\right] _{q}^{2}}-\frac{p^{4}}{4\left[ 3\right]
_{q}^{2}}$ on $[0,2],$ when $p=2.$
\end{proof}

\begin{theorem}
Let $\ f$ given by (\ref{eq1}) be in the class $R(q).$Then%
\begin{equation*}
\left\vert T_{2}(3)\right\vert \leq \frac{16q^{2}}{%
(1+q+q^{2}+q^{3})(1+q+q^{2})^{2}}.
\end{equation*}
\end{theorem}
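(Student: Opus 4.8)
The plan is to mimic the structure of the proof of Theorem 3.1, since $T_{2}(3) = a_{3}^{2} - a_{4}^{2}$ has the same shape as $T_{2}(2) = a_{3}^{2} - a_{2}^{2}$, only with $a_{2}$ replaced by $a_{4}$. First I would write, from $(D_{q}f)(z)=p(z)$ and the coefficient identities \eqref{eq10}, \eqref{eq11}, that
\[
\left\vert T_{2}(3)\right\vert = \left\vert a_{3}^{2}-a_{4}^{2}\right\vert = \left\vert \frac{p_{2}^{2}}{[3]_{q}^{2}} - \frac{p_{3}^{2}}{[4]_{q}^{2}}\right\vert .
\]
Then I would substitute the Libera–Złotkiewicz-type representations from Lemma \ref{le-gr}: use $2p_{2}=p_{1}^{2}+x(4-p_{1}^{2})$ and the formula for $4p_{3}$ in terms of $p_{1}$, $x$, $z$. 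Writing $p_{1}=p$ and normalizing $p\in[0,2]$ by Lemma \ref{le-Pom75}, and putting $P=4-p^{2}$, the expression becomes a function of $p$, $|x|$ and $|z|$ after applying the triangle inequality.

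The key steps are then: (i) bound $|a_{3}|^{2} = |p_{2}|^{2}/[3]_{q}^{2}$ by $\bigl(p^{2}+|x|P\bigr)^{2}/(4[3]_{q}^{2})$; (ii) bound $|a_{4}|^{2}$ by the square of $\bigl(p^{3}+2P p|x| + P p |x|^{2} + 2P(1-|x|^{2})\bigr)/(4[4]_{q}^{2})$ — or, if one only needs an upper bound on $|a_{3}^{2}-a_{4}^{2}|$, simply drop the $-a_{4}^{2}$ term when it is subtracted and treat the $+a_{4}^{2}$ contribution when it is added, exactly the "treating the cases when the absolute term is positive or negative" device used in Theorem 3.1; (iii) reduce to a real function $G(|x|)$ on $[0,1]$ (after first eliminating $|z|$ by $|z|\le 1$), show $G'(|x|)>0$ so that the maximum is at $|x|=1$; (iv) then maximize the resulting one-variable function of $p$ on $[0,2]$ and check the maximum occurs at $p=2$, where $P=0$, $|p_{2}|\le 2$, $|p_{3}|\le 2$, giving $|a_{3}|\le 2/[3]_{q}$, $|a_{4}|\le 2/[4]_{q}$ and hence
\[
\left\vert T_{2}(3)\right\vert \le \frac{4}{[3]_{q}^{2}} = \frac{4(1-q)^{2}}{(1-q^{3})^{2}} = \frac{4}{(1+q+q^{2})^{2}} .
\]

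At this point I would reconcile with the claimed bound $\dfrac{16q^{2}}{(1+q+q^{2}+q^{3})(1+q+q^{2})^{2}}$: since $[2]_{q}=1+q$, $[3]_{q}=1+q+q^{2}$, $[4]_{q}=1+q+q^{2}+q^{3}$, one has $[4]_{q}^{2} = (1+q+q^{2}+q^{3})^{2}$ and the stated right-hand side equals $\dfrac{16q^{2}}{[4]_{q}[3]_{q}^{2}}$. This suggests that the sharp configuration is not $p=2$ but an interior point, and that the dominant term is actually $\dfrac{p^{2}P}{2[3]_{q}^{2}}+\cdots$ balanced against the $a_{4}^{2}$ piece; so in step (iv) I would keep all the $p$-dependent terms, differentiate the full one-variable majorant in $p$ on $[0,2]$, locate the critical point, and simplify. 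The main obstacle is precisely this last optimization: carrying the $[3]_{q}$ and $[4]_{q}$ denominators through the algebra while deciding the sign of the absolute-value term and identifying which endpoint or interior critical point of the $p$-polynomial delivers the stated constant $16q^{2}/\bigl([4]_{q}[3]_{q}^{2}\bigr)$. Everything else is a direct transcription of the method already used for $T_{2}(2)$.
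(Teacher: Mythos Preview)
Your plan is essentially identical to the paper's: substitute the representations of Lemma~\ref{le-gr} for $p_{2}$ and $p_{3}$, normalize $p_{1}=p\in[0,2]$, apply the triangle inequality (first in $|z|\le 1$, then in $|x|$) to obtain a majorant $G(p,|x|)$, show $\partial G/\partial|x|\ge 0$ so that $|x|=1$ is optimal, and then maximize the resulting one-variable expression in $p$. Your step (iv), leading to the bound $4/[3]_{q}^{2}=4/(1+q+q^{2})^{2}$, is exactly what the paper's own proof concludes with: its last displayed line is
\[
\left\vert a_{4}^{2}-a_{3}^{2}\right\vert \le \frac{4}{(1+q+q^{2})^{2}}.
\]

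So the discrepancy you spotted between this and the stated constant $16q^{2}/\bigl((1+q+q^{2}+q^{3})(1+q+q^{2})^{2}\bigr)$ is real and lives in the paper, not in your argument. The two expressions coincide only at $q=1$; in fact the stated bound for $T_{2}(3)$ is verbatim the bound appearing in the \emph{next} theorem for $T_{3}(2)$, which strongly suggests a transcription error in the statement. Your instinct to hunt for an interior critical point in $p$ to recover the $16q^{2}/([4]_{q}[3]_{q}^{2})$ constant is therefore a red herring: the paper makes no such refinement, and you should simply stop at $4/(1+q+q^{2})^{2}$.
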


\begin{proof}
Using (\ref{eq9}), (\ref{eq10}), (\ref{eq11}) and Lemma \ref{le-gr} to express $p_{2}$
and $p_{3}$ in terms of $p_{1}$, we obtain, with $P=(4-p^{2})$ and $%
R=(1-\left\vert x\right\vert ^{2})z$,%
\begin{eqnarray*}
\left\vert a_{4}^{2}-a_{3}^{2}\right\vert  &=&\frac{1}{4}\left\{ \left(
\frac{p^{2}P^{2}}{4\left[ 4\right] _{q}^{2}}+\frac{P^{2}}{\left[ 4\right]
_{q}^{2}}-\frac{pP^{2}}{\left[ 4\right] _{q}^{2}}\right) \left\vert
x\right\vert ^{4}+\left( \frac{p^{2}P^{2}}{\left[ 4\right] _{q}^{2}}-\frac{%
2pP^{2}}{\left[ 4\right] _{q}^{2}}\right) \left\vert x\right\vert
^{3}\right.  \\
&& \\
&&\left. +\left( \frac{p^{2}P^{2}}{\left[ 4\right] _{q}^{2}}-\frac{2P^{2}}{%
\left[ 4\right] _{q}^{2}}+\frac{P^{2}}{\left[ 3\right] _{q}^{2}}+\frac{p^{4}P%
}{2\left[ 4\right] _{q}^{2}}-\frac{p^{3}P}{\left[ 4\right] _{q}^{2}}+\frac{%
pP^{2}}{\left[ 4\right] _{q}^{2}}\right) \left\vert x\right\vert ^{2}\right.
\\
&& \\
&&\left. +\left( \frac{p^{4}P}{\left[ 4\right] _{q}^{2}}+\frac{2pP^{2}}{%
\left[ 4\right] _{q}^{2}}+\frac{2p^{2}P}{\left[ 3\right] _{q}^{2}}\right)
\left\vert x\right\vert \right.  \\
&& \\
&&\left. +\frac{P^{2}}{\left[ 4\right] _{q}^{2}}+\frac{p^{3}P}{\left[ 4%
\right] _{q}^{2}}+\left\vert \frac{p^{6}}{4\left[ 4\right] _{q}^{2}}-\frac{%
p^{4}}{\left[ 3\right] _{q}^{2}}\right\vert \right\}  \\
&& \\
&&:=G(p,\left\vert x\right\vert ).
\end{eqnarray*}%
Differentiating and using a simple calculus shows that $\frac{\partial
G(p,\left\vert x\right\vert )}{\partial \left\vert x\right\vert }\geq 0$ for
$\left\vert x\right\vert \in \left[ 0,1\right] $ and fixed $p\in \left[ 0,2%
\right] .\ $It follows that $G(p,\left\vert x\right\vert )$ is an increasing
function of $\left\vert x\right\vert $. So $G(p,\left\vert x\right\vert
)\leq G(p,1)$. Upon letting $\left\vert x\right\vert =1$, a simple algebraic
manipulation yields%
\begin{equation*}
\left\vert a_{4}^{2}-a_{3}^{2}\right\vert \leq \frac{4}{(1+q+q^{2})^{2}}.
\end{equation*}
\end{proof}

\begin{theorem}
Let $\ f$ given by (\ref{eq1}) be in the class $R(q).$Then%
\begin{equation*}
\left\vert T_{3}(2)\right\vert \leq \frac{16q^{2}}{%
(1+q+q^{2}+q^{3})(1+q+q^{2})^{2}}.
\end{equation*}
\end{theorem}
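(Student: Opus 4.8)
The plan is to reduce the estimate, exactly as in the two preceding proofs, to a one-variable inequality in the leading Carath\'eodory coefficient. First I expand the determinant: subtracting the third row from the first and then expanding along the resulting first row gives the factorization
\begin{equation*}
T_{3}(2)=(a_{2}-a_{4})\left( a_{2}^{2}+a_{2}a_{4}-2a_{3}^{2}\right) .
\end{equation*}
Using (\ref{e3}) together with (\ref{eq9})--(\ref{eq11}) to write $a_{2}=p_{1}/[2]_{q}$, $a_{3}=p_{2}/[3]_{q}$ and $a_{4}=p_{3}/[4]_{q}$, this expresses $|T_{3}(2)|$ explicitly in terms of $p_{1},p_{2},p_{3}$.

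I then invoke Lemma \ref{le-gr} to substitute for $p_{2}$ and $p_{3}$ in terms of $p_{1}$, $x$ and $z$ with $|x|\leq 1$, $|z|\leq 1$, and by Lemma \ref{le-Pom75} I may assume $p_{1}=p\in [0,2]$. Writing $P=4-p^{2}$ and $R=(1-|x|^{2})z$ as before, and using $|z|\leq 1$ in the triangle inequality, I bound $|T_{3}(2)|$ above by a function $G(p,|x|)$ which is a polynomial of degree at most four in $|x|$, with coefficients that are polynomials in $p$ and $P$ over the appropriate $[k]_{q}^{2}$, together with an absolute-value term inherited from the $x$-independent part. A routine calculus argument should then show $\partial G(p,|x|)/\partial |x|\geq 0$ on $[0,1]$ for each fixed $p\in [0,2]$, so that $G$ is nondecreasing in $|x|$ and hence $G(p,|x|)\leq G(p,1)$.

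Setting $|x|=1$ annihilates the $R$-term (since $1-|x|^{2}=0$) and leaves $G(p,1)$ as a function of $p$ alone on $[0,2]$. I expect this to attain its maximum at the endpoint $p=2$, where $P=0$ and therefore $p_{2}=p_{3}=2$, i.e. $a_{k}=2/[k]_{q}$; substituting $[2]_{q}=1+q$, $[3]_{q}=1+q+q^{2}$ and $[4]_{q}=1+q+q^{2}+q^{3}$ should then yield exactly
\begin{equation*}
\left\vert T_{3}(2)\right\vert \leq \frac{16q^{2}}{(1+q+q^{2}+q^{3})(1+q+q^{2})^{2}}.
\end{equation*}

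The main obstacle is the pair of optimization steps. Checking that $G$ is genuinely nondecreasing in $|x|$ amounts to controlling the sign of its $|x|$-derivative uniformly over $p\in [0,2]$, and since the coefficients of $G$ mix positive and negative contributions this is the delicate part of the work; and maximizing $G(p,1)$ over $p$ requires, just as in the treatment of $|T_{2}(2)|$, a case distinction according to the sign of the quantity inside the remaining absolute value. I also note that, although the target bound coincides with the one for $|T_{2}(3)|$, the factor $a_{2}^{2}+a_{2}a_{4}-2a_{3}^{2}$ is not of the form $a_{4}^{2}-a_{3}^{2}$, so no shortcut through the previous theorem seems available; everything else is routine algebra with the $q$-integers $[n]_{q}$.
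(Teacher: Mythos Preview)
Your factorization $T_{3}(2)=(a_{2}-a_{4})(a_{2}^{2}+a_{2}a_{4}-2a_{3}^{2})$ is exactly what the paper uses, but from there the paper takes a simpler route than yours: it bounds the two factors \emph{separately}, obtaining
\[
|a_{2}-a_{4}|\leq \frac{2q^{2}}{[4]_{q}},\qquad |a_{2}^{2}-2a_{3}^{2}+a_{2}a_{4}|\leq \frac{8}{[3]_{q}^{2}},
\]
and then multiplies. Each factor is estimated by the one-variable technique you describe, but applied to a low-degree expression rather than to the full product; this makes the monotonicity in $|x|$ and the maximization in $p$ genuinely routine.

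Your plan to bound the whole product at once runs into a concrete problem at the last step. At $p=2$ one has $P=0$, so every term coming from the triangle inequality vanishes and $G(2,1)$ equals the \emph{actual} value of $|T_{3}(2)|$ for the extremal Carath\'eodory function $p_{1}=p_{2}=p_{3}=2$. But that value is strictly smaller than the stated bound: for instance at $q=1$ one gets $a_{2}=1$, $a_{3}=2/3$, $a_{4}=1/2$, hence $|T_{3}(2)|=\tfrac{1}{2}\cdot\tfrac{11}{18}=\tfrac{11}{36}$, while the theorem's bound is $\tfrac{4}{9}=\tfrac{16}{36}$. So your expectation that ``substituting \ldots\ should then yield exactly'' the bound is false. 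The reason is that the paper's bound is not sharp: the two factor estimates above are attained at \emph{different} points ($p=2$ for the first, $p=0$ with $|x|=1$ for the second), so their product overshoots the true maximum. If you insist on optimizing the product directly you will land on a smaller number than the one in the statement, and you would then need a separate argument to compare it with $16q^{2}/([4]_{q}[3]_{q}^{2})$; the paper's factor-by-factor approach avoids this entirely.
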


\begin{proof}
Using the same techniques as in Theorem 6 and Theorem 7, we obtain%
\begin{equation*}
\left\vert a_{2}-a_{4}\right\vert \leq \left\vert \frac{p}{\left[ 2\right]
_{q}}-\frac{p^{3}}{4\left[ 4\right] _{q}}\right\vert +\frac{\left\vert
x\right\vert pP}{2\left[ 4\right] _{q}}+\frac{\left\vert x\right\vert
^{2}(p-2)P}{4\left[ 4\right] _{q}}+\frac{P}{2\left[ 4\right] _{q}}.
\end{equation*}%
It is easy exercise to show that $\left\vert a_{2}-a_{4}\right\vert \leq
\frac{p}{\left[ 2\right] _{q}}-\frac{p^{3}}{4\left[ 4\right] _{q}}$ on $%
[0,2],$ when $p=2.$ Thus,%
\begin{equation}
\left\vert a_{2}-a_{4}\right\vert \leq \frac{2q^{2}}{1+q+q^{2}+q^{3}}.
\label{A}
\end{equation}%
Using the same techniques as above, one can obtain with simple computations
that
\begin{equation}
\left\vert a_{2}^{2}-2a_{3}^{2}+a_{2}a_{4}\right\vert \leq \frac{8}{%
(1+q+q^{2})^{2}}.  \label{B}
\end{equation}%
Finally, from (\ref{A}) and (\ref{B}) we obtain%
\begin{equation*}
\left\vert T_{3}(2)\right\vert =\left\vert
(a_{2}-a_{4})(a_{2}^{2}-2a_{3}^{2}+a_{2}a_{4})\right\vert \leq \frac{16q^{2}%
}{(1+q+q^{2}+q^{3})(1+q+q^{2})^{2}}.
\end{equation*}
\end{proof}

\begin{theorem}
Let $\ f$ given by (\ref{eq1}) be in the class $R(q).$Then%
\begin{equation*}
\left\vert T_{3}(1)\right\vert \leq 1+\frac{4}{(1+q+q^{2})^{2}}.
\end{equation*}
\end{theorem}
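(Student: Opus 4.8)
The plan is to begin from the explicit expansion of the $3\times 3$ Toeplitz determinant. Expanding along the first row gives $T_{3}(1)=1-2a_{2}^{2}+2a_{2}^{2}a_{3}-a_{3}^{2}=(1-a_{2}^{2})^{2}-(a_{3}-a_{2}^{2})^{2}$, a difference of two squares, and this is the form I would exploit. As in the proofs of the preceding theorems, substitute $a_{2}=p_{1}/[2]_{q}$ and $a_{3}=p_{2}/[3]_{q}$ from (\ref{eq9}) and (\ref{eq10}), pass (using Lemma \ref{le-Pom75}) to the normalization $p_{1}=p\in[0,2]$, and use Lemma \ref{le-gr} to write $p_{2}=\frac{1}{2}\big(p^{2}+x(4-p^{2})\big)$ with $\left\vert x\right\vert\leq 1$. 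Writing $w=a_{2}^{2}=p^{2}/[2]_{q}^{2}\in[0,4/[2]_{q}^{2}]$, the coefficient $a_{3}$ becomes affine in $x$, so $T_{3}(1)$ is a concave quadratic in $x$; completing the square, the $x$-free part collapses exactly to $(1-w)^{2}$, and one is left with $T_{3}(1)=(1-w)^{2}-(d-d^{\ast})^{2}$, where $d=x(4-p^{2})/(2[3]_{q})$ (so $\left\vert d\right\vert\leq c:=(4-p^{2})/(2[3]_{q})$) and $d^{\ast}=(1+q^{2})p^{2}/(2[2]_{q}^{2}[3]_{q})\geq 0$. The identities $2[3]_{q}-[2]_{q}^{2}=1+q^{2}$ and $1+q^{2}+[2]_{q}^{2}=2[3]_{q}$ are what make the rewriting clean.

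Next I would read off the two-sided bound on $T_{3}(1)$ as $d$ runs over $[-c,c]$. Since $d^{\ast}\geq 0$, the minimum over $d$ is $(1-w)^{2}-(c+d^{\ast})^{2}$; a short computation gives $c+d^{\ast}=\big(2-qp^{2}/[2]_{q}^{2}\big)/[3]_{q}$, and because $qp^{2}/[2]_{q}^{2}\leq 4q/(1+q)^{2}\leq 1$ we get $(c+d^{\ast})^{2}\leq 4/[3]_{q}^{2}$, hence $T_{3}(1)\geq -4/[3]_{q}^{2}$. For the upper bound there are two cases, according to whether the vertex $d^{\ast}$ lies in $[-c,c]$, equivalently whether $p^{2}\leq 2[2]_{q}^{2}/[3]_{q}$. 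If it does, $\max_{d}T_{3}(1)=(1-w)^{2}\leq 1$, since then $w\leq 2/[3]_{q}<2$. If not, using the identity $d^{\ast}-c=w-2/[3]_{q}$ one finds $\max_{d}T_{3}(1)=(1-2/[3]_{q})(1+2/[3]_{q}-2w)$; splitting on the sign of $1-2/[3]_{q}$---exactly the ``absolute term positive or negative'' device of the preceding proofs---the case $[3]_{q}\geq 2$ again gives a value $\leq 1$, while for $[3]_{q}<2$ the expression is increasing in $w$ and is maximized at $p=2$, where it equals $1-\frac{4}{[3]_{q}^{2}}+\frac{8(2-[3]_{q})}{[2]_{q}^{2}[3]_{q}}$; this is $\leq 1+\frac{4}{[3]_{q}^{2}}$ precisely because $[3]_{q}(2-[3]_{q})=1-(q+q^{2})^{2}\leq(1+q)^{2}=[2]_{q}^{2}$. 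Combining the lower and upper estimates yields $\left\vert T_{3}(1)\right\vert\leq 1+4/(1+q+q^{2})^{2}$.

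The routine parts are the completion of the square and the final scalar inequality $1-(q+q^{2})^{2}\leq(1+q)^{2}$, which is immediate. The real bookkeeping is the nested case analysis above (vertex inside the interval or not, then the sign of $1-2/[3]_{q}$, then optimizing the remaining linear expression in $w$). The step I would want to justify carefully rather than assert is the reduction $p_{1}=p\in[0,2]$: unlike $\left\vert a_{3}^{2}-a_{2}^{2}\right\vert$ or $\left\vert a_{2}-a_{4}\right\vert$, the quantity $T_{3}(1)$ depends on $a_{2}^{2}$ rather than on $\left\vert a_{2}\right\vert$, so it is not invariant under the rotation $f(z)\mapsto e^{-i\theta}f(e^{i\theta}z)$ that makes this normalization automatic in the preceding theorems; one must argue separately that an extremal $f$ may be taken with $p_{1}\geq 0$, or else carry a genuinely complex $p_{1}$ through the square completion (which changes the intermediate bounds). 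I expect this to be the main obstacle.
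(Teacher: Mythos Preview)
Your factorization $T_{3}(1)=(1-a_{2}^{2})^{2}-(a_{3}-a_{2}^{2})^{2}$ is clean, and the algebraic identities $2[3]_{q}-[2]_{q}^{2}=1+q^{2}$ and $d^{\ast}-c=w-2/[3]_{q}$ are correct. The gap is that you then treat $d=x(4-p^{2})/(2[3]_{q})$ as a \emph{real} parameter in $[-c,c]$ and bound the real number $T_{3}(1)$ from above and below. Lemma~\ref{le-gr} only gives $|x|\leq 1$ with $x$ complex, so even after normalizing $p_{1}=p\in[0,2]$ the quantity $T_{3}(1)=(1-w)^{2}-(d-d^{\ast})^{2}$ is genuinely complex, and a two-sided real estimate does not control $|T_{3}(1)|$. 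This is not a technicality one can wave away: the bound is attained at $p_{1}=0$, $p_{2}=2i$ (i.e.\ $x=i$), where $T_{3}(1)=1-(2i/[3]_{q})^{2}=1+4/[3]_{q}^{2}$; if you restrict to real $x$ at $p=0$ you only get $|T_{3}(1)|\leq\max\bigl(1,\,|1-4/[3]_{q}^{2}|\bigr)<1+4/[3]_{q}^{2}$, so the extremal configuration is invisible to your argument. You rightly flag the reduction to real $p_{1}$ as delicate, but it is the implicit reduction to real $x$ that actually fails.

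The paper sidesteps this by not attempting a two-sided bound. After the same substitution it applies the triangle inequality term by term, yielding
\[
|T_{3}(1)|\leq\Bigl|1+Ap^{4}-\tfrac{2p^{2}}{[2]_{q}^{2}}\Bigr|+Ap^{2}(4-p^{2})+\tfrac{(4-p^{2})^{2}}{4[3]_{q}^{2}},\qquad A=\tfrac{1}{[2]_{q}^{2}[3]_{q}}-\tfrac{1}{4[3]_{q}^{2}},
\]
which is valid for complex $x$ because only $|x|\leq 1$ is used; one then maximizes this single-variable expression over $p\in[0,2]$. (The paper states the maximum is at $p=2$; in fact it is at $p=0$, where the value $1+4/[3]_{q}^{2}$ is read off directly.) Your difference-of-squares decomposition could be rescued by maximizing $|(1-w)^{2}-(d-d^{\ast})^{2}|$ over the complex disk $|d|\leq c$, but that is a two-real-parameter problem and no longer reduces to the vertex/endpoint case split you outline.
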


\begin{proof}
Expanding the determinant by using (\ref{eq9}), (\ref{eq10}), (\ref{eq11})
and Lemma \ref{le-gr}, we obtain%
\begin{eqnarray*}
\left\vert 1+2a_{2}^{2}(a_{3}-1)-a_{3}^{2}\right\vert  &=&\left\vert 1+2%
\frac{p_{1}^{2}}{\left[ 2\right] _{q}^{2}}\left( \frac{p_{2}}{\left[ 3\right]
_{q}}-1\right) -\frac{p_{2}^{2}}{\left[ 3\right] _{q}^{2}}\right\vert  \\
&& \\
&=&\left\vert 1+\left( \tfrac{1}{\left[ 2\right] _{q}^{2}\left[ 3\right] _{q}%
}-\tfrac{1}{4\left[ 3\right] _{q}^{2}}\right) p_{1}^{4}-2\tfrac{p_{1}^{2}}{%
\left[ 2\right] _{q}^{2}}+\left( \tfrac{1}{\left[ 2\right] _{q}^{2}\left[ 3%
\right] _{q}}-\tfrac{1}{4\left[ 3\right] _{q}^{2}}\right) p_{1}^{2}xP-\tfrac{%
x^{2}P^{2}}{4\left[ 3\right] _{q}^{2}}\right\vert .
\end{eqnarray*}%
As before, without loss in generality we can assume that $p_{1}=p$, where $%
p\in \lbrack 0,2]$. Then, by using the triangle inequality and the fact that
$\left\vert x\right\vert \leq 1$ we obtain%
\begin{equation*}
\left\vert T_{3}(1)\right\vert \leq \left\vert 1+\left( \tfrac{1}{\left[ 2%
\right] _{q}^{2}\left[ 3\right] _{q}}-\tfrac{1}{4\left[ 3\right] _{q}^{2}}%
\right) p^{4}-2\tfrac{p^{2}}{\left[ 2\right] _{q}^{2}}\right\vert +\left(
\tfrac{1}{\left[ 2\right] _{q}^{2}\left[ 3\right] _{q}}-\tfrac{1}{4\left[ 3%
\right] _{q}^{2}}\right) p^{2}(4-p^{2})+\tfrac{(4-p^{2})^{2}}{4\left[ 3%
\right] _{q}^{2}}.
\end{equation*}%
It is now a simple exercise in elementary calculus to show that this
expression has a maximum value when $p=2$, which completes the proof.
\end{proof}

\end{document}